\numberwithin{equation}{section}
\newtheorem{thm}{Theorem}[section]
\newtheorem{lm}[thm]{Lemma}
\theoremstyle{definition}
\theoremstyle{definition}
\newtheorem{ex}[thm]{Example}
\newcommand{\Rn}{\mathbb{R}^{n}}
\newcommand{\R}{\mathbb{R}}
\newcommand {\grtrsim} {\ {\raise-.5ex\hbox{$\buildrel>\over\sim$}}\ }
\newcommand{\e}{\varepsilon}
\newcommand{\khii}{\text{\lower -.4ex\hbox{$\chi$}}}
\DeclareMathOperator{\spt}{spt}
\renewcommand{\a}{\alpha}
\begin{document}
\title {Exceptional set estimates for the Hausdorff dimension of intersections}
\author{Pertti Mattila}

\thanks{The author was supported by the Academy of Finland.} \subjclass[2000]{Primary 28A75} \keywords{Hausdorff dimension, intersection, energy integral, Fourier transform}

\begin{abstract} 
Let $A$ and $B$ be Borel subsets of the Euclidean $n$-space with $\dim A + \dim B > n$ and let $0<u<\dim A + \dim B - n$ where $\dim$ denotes Hausdorff dimension. Then there is a set $E\subset O(n)$ of orthogonal transformations such that for $g\in O(n)\setminus E$,~   $\dim A\cap (g(B)+z) > u$ for $z$ in a set of positive Lebesgue measure. If $\dim A + \dim B > n+1$, then $\dim E\leq n(n-1)/2+1-u$, and if $\dim A \leq (n-1)/2$, then $\dim E\leq n(n-1)/2-u$. If $A$ is a Salem set and $0<u<\dim A + \dim B -n$ and $\dim A + \dim B > 2n-1$,  then $\dim A\cap (B+z) > u$ for $z$ in a set of positive Lebesgue measure. If $\dim A + \dim B \leq 2n-1$, the set of exceptional $g\in O(n)$ has dimension at most $n(n-1)/2-u$.

\end{abstract}

\maketitle

\section{Introduction} 

We let $O(n)$ denote the orthogonal group of the Euclidean $n$-space $\Rn$ and $\theta_n$ its Haar probability measure. We metrize $O(n)$ with the usual operator norm. Let also $\mathcal L^n$ stand for the Lebesgue measure on $\Rn$ and let $\dim$ stand for the Hausdorff dimension and $\mathcal H^s$ for $s$-dimensional Hausdorff measure. We shall prove the following theorem:

\begin{thm}\label{theo1}
Let $s$ and $t$ be positive numbers with $s+t > n+1$. Let $A$ and $B$ be Borel subsets of $\Rn$ with $\mathcal H^s(A)>0$ and $\mathcal H^t(B)>0$. Then 
there is $E\subset O(n)$ such that 
$$\dim E\leq 2n-s-t+(n-1)(n-2)/2=n(n-1)/2-(s+t-(n+1))$$ 
and for  $g\in O(n)\setminus E$,
\begin{equation}\label{eq4}
\mathcal L^n(\{z\in\Rn: \dim A\cap (g(B)+z)\geq s+t-n\})>0.
\end{equation}
\end{thm}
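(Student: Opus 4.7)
The plan is to argue by contradiction using the Fourier-analytic intersection/slicing formalism. Set $\sigma_0 := 2n-s-t+(n-1)(n-2)/2$ and let $E\subset O(n)$ be the set of those $g$ for which (\ref{eq4}) fails. Assuming for contradiction that $\dim E > \sigma_0$, I would fix $\sigma \in (\sigma_0, \dim E)$ and, by Frostman's lemma, a nonzero Borel measure $\lambda$ with compact support in $E$ and finite $\sigma$-energy $I_\sigma(\lambda)$. Using $\mathcal H^s(A), \mathcal H^t(B) > 0$, I would also pick Frostman measures $\mu$ on $A$ and $\nu$ on $B$ with $I_{s'}(\mu), I_{t'}(\nu) < \infty$, where $s' < s$, $t' < t$ are close enough to $s, t$ that, for a suitable $u$ just below $s+t-n$, both $u < s'+t'-n$ and the auxiliary inequality $\sigma < 2n-s'-t' + (n-1)(n-2)/2$ hold.

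\textbf{Fourier averaging.} For each $g \in O(n)$, the standard construction of an intersection measure via convolution with an approximate identity, followed by Plancherel, gives
\begin{equation*}
\int_\Rn I_u\bigl(\mu \cap (g\nu + z)\bigr)\, d\mathcal L^n(z) \; \lesssim \; \int_\Rn |\widehat\mu(\xi)|^2\, |\widehat\nu(g^{-1}\xi)|^2\, |\xi|^{u-n}\, d\xi,
\end{equation*}
valid for all $u < s'+t'-n$. Integrating against $\lambda$ and applying Fubini, I would reduce matters to showing
\begin{equation*}
\int_\Rn |\widehat\mu(\xi)|^2\, |\xi|^{u-n}\, K_\lambda(\xi)\, d\xi < \infty, \qquad K_\lambda(\xi) := \int_{O(n)} |\widehat\nu(g^{-1}\xi)|^2\, d\lambda(g).
\end{equation*}
Indeed, this finiteness forces the left-hand side of the previous display to be finite for $\lambda$-a.e.\ $g$, so that $\dim\bigl(A \cap (g(B)+z)\bigr) \ge u$ on a $z$-set of positive Lebesgue measure, contradicting $\spt \lambda \subset E$. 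Letting $u$, $s'$, $t'$ increase to their upper limits and then $\sigma \downarrow \sigma_0$ yields the claimed bound $\dim E \le \sigma_0$.

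\textbf{The main obstacle: estimating $K_\lambda$.} For $\xi \neq 0$, the orbit map $\pi_\xi : g \mapsto g^{-1}\xi$ exhibits $O(n)$ as a principal $O(n-1)$-bundle over $S^{n-1}(|\xi|)$, with fibers of Hausdorff dimension $(n-1)(n-2)/2$. The crux is the uniform-in-$\xi$ Frostman transfer
\begin{equation*}
\lambda\bigl(\{g \in O(n) : |g^{-1}\xi - y| < r\}\bigr) \; \le \; C\bigl(I_\sigma(\lambda)\bigr)\, r^{\tau}, \qquad \tau := \sigma - (n-1)(n-2)/2,
\end{equation*}
valid for all $y\in\Rn$, $\xi \neq 0$, $r>0$, obtained by exploiting the bi-invariance of the operator-norm metric on $O(n)$ together with the energy bound $I_\sigma(\lambda)<\infty$. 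This says that $(\pi_\xi)_\#\lambda$ is a $\tau$-dimensional measure on the sphere of radius $|\xi|$, uniformly in $\xi$. I would then combine it with classical spherical-averaging estimates for $|\widehat\nu|^2$ against $\tau$-dimensional measures on $S^{n-1}$, and with the energy/Plancherel identity $\int |\widehat\mu(\xi)|^2 |\xi|^{s'-n}\,d\xi \simeq I_{s'}(\mu)$, to reduce the required finiteness to the compatible numerical inequalities $u < s'+t'-n$ and $\tau < 2n-s'-t'$. The hypothesis $s+t>n+1$ is precisely what makes $\tau < n-1$, which is the regime in which the spherical-averaging estimates produce useful decay; absorbing the fiber dimension $(n-1)(n-2)/2$ into the sharp exponent is the delicate part that drives the appearance of that quantity in the final bound.
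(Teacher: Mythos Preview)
Your overall architecture---assume $\dim E>\sigma_0$, put a Frostman measure $\lambda$ on $E$, take Frostman measures $\mu,\nu$ on $A,B$, and derive a contradiction from an averaged energy bound, with the fiber dimension $(n-1)(n-2)/2$ entering through the transfer estimate for $\lambda$ under $g\mapsto g^{-1}\xi$---is exactly the paper's. The gap is in the ``Fourier averaging'' step: the pointwise-in-$g$ inequality you display is not what the convolution/Plancherel computation yields. Carrying it out (as in the derivation of (\ref{eq8})) one gets the \emph{double} frequency integral
\[
\int_{\Rn} I_u(\nu_{g,z,\e}\mu)\,d\mathcal L^n z \;=\; c(n,u)\iint_{\Rn\times\Rn} |x-g(z)|^{u-n}\,|\widehat\mu(x)|^2\,|\widehat{\nu_\e}(z)|^2\,dx\,dz,
\]
not a single integral with the diagonal product $|\widehat\mu(\xi)|^2|\widehat\nu(g^{-1}\xi)|^2$. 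Indeed your right-hand side is bounded by $\|\widehat\nu\|_\infty^2\,I_u(\mu)$, which is finite for every $u<s$ and for \emph{every} $g$; if the inequality held, no averaging over $\lambda$ would be needed at all, which already signals a mistake. Consequently the reduction to $K_\lambda(\xi)=\int|\widehat\nu(g^{-1}\xi)|^2\,d\lambda(g)$ is unjustified, and the appeal to ``classical spherical-averaging estimates for $|\widehat\nu|^2$ against $\tau$-dimensional measures on $S^{n-1}$'' is both aimed at the wrong object and not a standard tool (a Frostman measure on the sphere has no pointwise Fourier decay in general).

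The paper's route from the correct double integral is much shorter than the program you outline. After integrating in $\lambda$ and applying Fubini, the $\lambda$-integral sits on the kernel, and the only thing to bound is
\[
\int_{O(n)} |x-g(z)|^{u-n}\,d\lambda(g).
\]
Your own ``Frostman transfer'' (once the missing scale normalisation $(r/|z|)^\tau$ is restored; this is precisely Lemma~\ref{lemma2}) controls this directly: on the critical region $|x|\approx|z|$ a layer-cake computation gives the bound $C|z|^{u-n}\approx C|x|^{s'-n}|z|^{t'-n}$ provided $\tau+u>n$, while the regions $|x|\gg|z|$ and $|z|\gg|x|$ are trivial. Note that the relevant numerical condition is $\tau>2n-s'-t'$, i.e.\ $\sigma>2n-s'-t'+(n-1)(n-2)/2$, which is the \emph{opposite} of the ``auxiliary inequality'' you wrote; it is already guaranteed by $\sigma>\sigma_0$ for $s',t'$ close to $s,t$, so no limiting in $\sigma$ is required.
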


The version stated in the abstract concerning the case $\dim A + \dim B > n+1$ is slightly weaker than Theorem \ref{theo1}. Notice that the above upper bound for the dimension of $E$ is at least $(n-1)(n-2)/2$ which is the dimension of $O(n-1)$. In Section \ref{Examples} we show that it is needed in the estimates. The assumption $s+t>n+1$ only comes from the fact that the statement is trivial if $s+t\leq n+1$: then the above upper for $\dim E$ is at least $n(n-1)/2=\dim O(n)$ and we could take $E=O(n)$. 

This is an exceptional set estimate related to the following result of 
\cite{M2}: (\ref{eq4}) holds for $\theta_n$ almost all $g\in O(n)$ if one of the sets has dimension bigger than $(n+1)/2$, see also Chapter 13 in \cite{M4} and Chapter 7 in \cite{M5}. This of course is satisfied when $s+t>n+1$, as in the theorem.  It is expected that this generic result with respect to $\theta_n$ should hold whenever $\dim A + \dim B > n$. Under this condition it was proved (without exceptional set estimates) in \cite{K} and \cite{M1} provided the orthogonal group is replaced by a larger transformation group, for example by similarity maps as in \cite{M1}, or, more generally by Kahane in \cite{K}, by any closed subgroup of the general linear group acting transitively outside the origin. For the orthogonal group no dimensional restrictions are needed provided one of the sets satisfies some extra condition, for example  if it is rectifiable, see \cite{M1}, or a Salem set, see \cite{M3}.

It is easy to see, cf. the remark at the end of \cite{M2}, that in Theorem \ref{theo1} the positivity of the Hausdorff measures cannot be relaxed to $\dim A = s$ and $\dim B = t$.

If one of the sets supports a measure with sufficiently fast average decay over spheres for the Fourier transform, we can improve the estimate of Theorem \ref{theo1}. Then the results even hold for the sum sets provided the dimensions are big enough. This is given in Theorem \ref{theo3}. It yields immediately the following result in case one of the sets is a Salem set. By definition, $A$ is a Salem set if for every $0<s<\dim A$ there is $\mu\in\mathcal M(A)$ such that $|\mu(x)|^2\lesssim |x|^{-s}$. A discussion on Salem sets can be found, for example, in \cite{M5}, Section 3.6. 

\begin{thm}\label{theo4}
Let $A$ and $B$ be Borel subsets of $\Rn$  and suppose that $A$ is a Salem set. Suppose that $0<u<\dim A + \dim B - n$.

(a) If $\dim A+\dim B>2n-1$, then 
\begin{equation}\label{eq5}
\mathcal L^n(\{z\in\Rn: \dim A\cap (B+z)\geq u\})>0.
\end{equation}
(b)If $\dim A+\dim B\leq 2n-1$, then there is $E\subset O(n)$ with 
$$\dim E\leq n(n-1)/2-u$$ 
such that for  $g\in O(n)\setminus E$,
\begin{equation}\label{eq7}
\mathcal L^n(\{z\in\Rn: \dim A\cap (g(B)+z)\geq u\})>0.
\end{equation}
\end{thm}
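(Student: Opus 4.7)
The plan is to deduce Theorem \ref{theo4} directly from Theorem \ref{theo3} by verifying its spherical Fourier-decay hypothesis for Salem sets. Given $0<u<\dim A+\dim B-n$, I fix auxiliary exponents $s<\dim A$ and $t<\dim B$ with $s+t-n>u$, which is possible since $u$ lies strictly below $\dim A+\dim B-n$. By definition of a Salem set there is $\mu\in\mathcal M(A)$ with $|\hat\mu(x)|^2\lesssim|x|^{-s}$ pointwise, and this trivially yields the spherical $L^2$ decay
$$\int_{S^{n-1}}|\hat\mu(r\omega)|^2\,d\sigma^{n-1}(\omega)\lesssim r^{-s},$$
which is the hypothesis Theorem \ref{theo3} is built to exploit. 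For $B$ I pick a Frostman measure $\nu\in\mathcal M(B)$ of finite $t$-energy.

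For part (a), the assumption $\dim A+\dim B>2n-1$ lets us also arrange $s+t>2n-1$, so the pair $(\mu,\nu)$ falls in the sum-set regime of Theorem \ref{theo3}: this yields $\dim A\cap(B+z)\geq s+t-n$ for $z$ in a set of positive Lebesgue measure, and since $s+t-n>u$ this is (\ref{eq5}). For part (b), the constraint $\dim A+\dim B\leq 2n-1$ routes us to the exceptional-set half of Theorem \ref{theo3}, which produces $E=E_{s,t}\subset O(n)$ of dimension at most $n(n-1)/2-(s+t-n)$ such that $\dim A\cap(g(B)+z)\geq s+t-n$ on a positive-measure set of $z$ for every $g\notin E$. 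Since $s+t-n>u$, these $g$ satisfy (\ref{eq7}), and the bound $n(n-1)/2-(s+t-n)$ is strictly smaller than $n(n-1)/2-u$, yielding the claimed estimate on $\dim E$.

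The substantive work is all packaged inside Theorem \ref{theo3}, whose proof presumably couples a Fourier/energy-integral analysis of sliced measures with spherical $L^2$ decay of $\hat\mu$; the present corollary is essentially the one-line observation that Salem sets satisfy the requisite hypothesis via their stronger pointwise Fourier decay, so there is no real obstacle at this stage beyond choosing $s,t$ consistently with the dimensional regime of each part.
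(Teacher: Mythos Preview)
Your argument is correct and is precisely the deduction the paper intends: Theorem \ref{theo4} is presented there as an immediate consequence of Theorem \ref{theo3}, and you have simply spelled out the choice of $s,t,\mu,\nu$ that makes this work in each dimensional regime. The only small omission is that Theorem \ref{theo3} also requires $I_{\gamma}(\mu)<\infty$; since you chose $s<\dim A$ strictly, you can take the Salem measure $\mu$ for an exponent $s''\in(s,\dim A)$, so that $|\widehat\mu(x)|^2\lesssim|x|^{-s''}$ gives both $\sigma(\mu)(r)\lesssim r^{-s}$ and $I_s(\mu)<\infty$.
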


Another consequence of Theorem \ref{theo3} is the following improvement of Theorem \ref{theo1} in the case where one of the sets has small dimension:

\begin{thm}\label{theo6}
Let $A$ and $B$ be Borel subsets of $\Rn$  and suppose that $\dim A\leq (n-1)/2$. If $0<u<\dim A + \dim B - n$, then there is $E\subset O(n)$ with 
$$\dim E\leq n(n-1)/2-u$$ 
such that for  $g\in O(n)\setminus E$,
\begin{equation}\label{eq18}
\mathcal L^n(\{z\in\Rn: \dim A\cap (g(B)+z)\geq u\})>0.
\end{equation}
\end{thm}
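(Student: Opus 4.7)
My plan is to deduce Theorem \ref{theo6} from Theorem \ref{theo3} in the same way that Theorem \ref{theo4}(b) was deduced, by observing that the assumption $\dim A \leq (n-1)/2$ is strong enough to force a Frostman measure on $A$ to satisfy the spherical Fourier decay that plays the role of the Salem property in the hypothesis of Theorem \ref{theo3}.

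Fix $u$ with $0 < u < \dim A + \dim B - n$, choose $s < \dim A$ and $t < \dim B$ with $s + t - n > u$, and use Frostman's lemma to pick $\mu \in \mathcal M(A)$ and $\nu \in \mathcal M(B)$ with $I_s(\mu) < \infty$ and $I_t(\nu) < \infty$. Since the assumption on $A$ forces $s \leq (n-1)/2$, a standard spherical averaging estimate (see \cite{M5}) yields
$$\sigma_\mu(r) := \int_{\Sn} |\widehat{\mu}(r\xi)|^2 \, d\sigma(\xi) \lesssim r^{-s}$$
for every $r > 0$. This is precisely the spherical Fourier decay that a Salem measure of exponent $s$ enjoys on average, so pointwise Salem-type bounds are not needed here; the averaging over $\Sn$ built into $\sigma_\mu$ is enough.

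Feeding the pair $(\mu,\nu)$ into Theorem \ref{theo3}---whose hypothesis is exactly a bound of this form on the spherical average of one of the measures---produces an exceptional set $E \subset O(n)$ with $\dim E \leq n(n-1)/2 - (s+t-n)$. Choosing $s,t$ so that $s+t-n > u$ then gives $\dim E \leq n(n-1)/2 - u$ and the desired conclusion (\ref{eq18}) for every $g \in O(n) \setminus E$.

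The only substantive point beyond invoking Theorem \ref{theo3} is verifying the spherical averaging estimate in the small-dimension regime, and this is precisely where the assumption $\dim A \leq (n-1)/2$ is used: the bound $\sigma_\mu(r) \lesssim r^{-s}$ for arbitrary Frostman measures breaks down once $s$ exceeds $(n-1)/2$, and without such a bound one is forced back to the weaker conclusion of Theorem \ref{theo1} with its extra additive loss.
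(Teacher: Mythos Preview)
Your argument is correct and is exactly the route the paper takes: the paper simply notes that for $s\leq (n-1)/2$ one has $\gamma_n(s)=s$ (i.e., $\sigma(\mu)(r)\lesssim I_s(\mu)r^{-s}$, \cite{M5}, Lemma 3.15) and that this together with Theorem \ref{theo3}(b) immediately gives Theorem \ref{theo6}. Your write-up just spells out the Frostman/parameter-choice details and the arithmetic $2n-1-s-t+(n-1)(n-2)/2=n(n-1)/2-(s+t-n)<n(n-1)/2-u$, all of which are straightforward.
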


The method used to prove Theorem \ref{theo1} can easily be modified to other subgroups of the general linear group $GL(n)$ in place of the orthogonal group. For example, let $S(n)$ be the group of similarities, the compositions of orthogonal maps and dilations. Then  $\dim S(n)=n(n-1)/2+1$ and for any $x,z\in\Rn\setminus\{0\}$, the dimension of $\{g\in S(n): g(z)=x\}$ is the same as the dimension of $O(n-1)$, that is, $(n-1)(n-2)/2$. With small changes in the proof of Theorem \ref{theo1} this leads to

\begin{thm}\label{theo2}
Let $s$ and $t$ be numbers with $0<s,t<n$ and $s+t > n$. Let $A$ and $B$ be Borel subsets of $\Rn$ such that $\mathcal H^s(A)>0$ and $\mathcal H^t(B)>0$. Then 
there is $E\subset S(n)$ with 
$$\dim E\leq 2n-s-t+(n-1)(n-2)/2$$ 
and for  $g\in S(n)\setminus E$,
\begin{equation}\label{eq6}
\mathcal L^n(\{z\in\Rn: \dim A\cap (g(B)+z)\geq s+t-n\})>0.
\end{equation}
\end{thm}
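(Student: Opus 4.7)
The plan is to adapt the proof of Theorem \ref{theo1}, using the two ingredients highlighted just before the statement: $\dim S(n) = n(n-1)/2 + 1$, and for nonzero $x, z$ the fibers $\{g \in S(n) : g(z) = x\}$ still have dimension $(n-1)(n-2)/2$. The extra degree of freedom in $S(n)$ coming from dilations compensates exactly for the fact that $S(n)$ now acts transitively on $\Rn \setminus \{0\}$, which is why the dimension threshold drops from $s+t > n+1$ to $s+t > n$.

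I would first fix $s' < s$ and $t' < t$ with $s' + t' > n$ and use Frostman's lemma to pick $\mu \in \MM(A)$, $\nu \in \MM(B)$ with $I_{s'}(\mu), I_{t'}(\nu) < \infty$. Supposing for contradiction that $\dim E > \alpha := 2n - s' - t' + (n-1)(n-2)/2$ (for $s', t'$ close enough to $s, t$), I would pick $\tau \in \MM(E)$ with $I_\alpha(\tau) < \infty$. Writing $u := s' + t' - n$, the goal becomes
\begin{equation*}
\int_{S(n)} \int_{\Rn} I_u(\mu \cap_z g_*\nu) \, d\mathcal L^n(z) \, d\tau(g) < \infty,
\end{equation*}
where $\mu \cap_z g_*\nu$ denotes the intersection measure on $A \cap (g(B) + z)$, defined via the standard convolution approximants. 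Finiteness forces $\dim(A \cap (g(B) + z)) \geq u$ for positively many $z$ and $\tau$-almost every $g \in E$, contradicting the exceptionality of $E$; letting $s' \to s$, $t' \to t$ then gives the stated bound.

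The bulk of the work lies in verifying the displayed inequality via Fourier analysis. Writing $g = rh$ with $r > 0$, $h \in O(n)$, so that $\widehat{g_*\nu}(\xi) = \hat\nu(r h^{-1} \xi)$, a Plancherel computation identical in structure to the one underlying Theorem \ref{theo1} reduces the integral to a constant multiple of
\begin{equation*}
\int_{\Rn} |\hat\mu(\xi)|^2 \, |\xi|^{u-n} \, F(\xi) \, d\xi, \qquad F(\xi) := \int_{S(n)} |\hat\nu(r h^{-1} \xi)|^2 \, d\tau(g).
\end{equation*}
Estimating $F(\xi)$ is the crux. The map $\Phi_\xi : g \mapsto r h^{-1} \xi$ is a smooth surjection $S(n) \to \Rn \setminus \{0\}$ with fibers of dimension $(n-1)(n-2)/2$. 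Combining this fiber-dimension fact with the Frostman condition $\tau(B(g_0, \delta)) \lesssim \delta^\alpha$ and pushing $\tau$ forward under $\Phi_\xi$ should yield an estimate that, once fed into the above display and combined with the energy-Fourier identity $\int |\hat\mu|^2 |\xi|^{s'-n} d\xi \lesssim I_{s'}(\mu) < \infty$, produces convergence precisely at the borderline $\alpha = 2n - s' - t' + (n-1)(n-2)/2$.

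The main obstacle is carrying out the estimate on $F(\xi)$ correctly; this is where the similarity case genuinely diverges from Theorem \ref{theo1}. For $O(n)$, $\Phi_\xi$ fibers the group over the single sphere $|\xi| \Sn$, so $F$ reduces to a spherical average of $|\hat\nu|^2$ that must be integrated separately in the radial direction, which is responsible for the ``$+1$'' in the hypothesis $s+t > n+1$. For $S(n)$, $\Phi_\xi$ is instead a fibration onto the entire punctured space $\Rn \setminus \{0\}$, so the pushforward of $\tau$ becomes a genuine $n$-dimensional weight and $|\hat\nu|^2$ can be integrated directly against it — which is why the weaker hypothesis $s+t > n$ now suffices. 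One also has to verify that the Frostman condition behaves well under the product structure $S(n) = O(n) \times (0, \infty)$ with an appropriate metric (e.g.\ the operator norm, which on the dilation factor is essentially the Euclidean distance near the identity), but this should amount to a bookkeeping exercise.
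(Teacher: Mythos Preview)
Your overall architecture --- put a Frostman measure $\tau$ on the putative exceptional set, prove a uniform energy bound for the intersection measures, and derive a contradiction --- is exactly the paper's route. The fiber--dimension observation (that $\{g\in S(n):g(z)=x\}$ has dimension $(n-1)(n-2)/2$ for all nonzero $x,z$) is also the right ingredient, and is precisely what the paper would feed into the analogue of Lemma~\ref{lemma2}.

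The gap is in your Fourier step. The Plancherel computation does \emph{not} reduce the double integral to $\int |\hat\mu(\xi)|^2\,|\xi|^{u-n}F(\xi)\,d\xi$ with $F(\xi)=\int|\hat\nu(rh^{-1}\xi)|^2\,d\tau(g)$. If you rerun the computation in Lemma~\ref{lemma1} with $g=rh\in S(n)$, the result is the $S(n)$ analogue of \eqref{eq8}:
\[
\iint I_u(\nu_{g,z,\e}\mu)\,dz\,d\tau g
= c(n,u)\iiint |\xi-r^{-1}h\zeta|^{u-n}\,|\hat\mu(\xi)|^2\,|\hat{\nu_\e}(\zeta)|^2\,d\tau g\,d\xi\,d\zeta,
\]
so the two frequency variables remain coupled through the kernel $|\xi-(g^{-1})^{T}\zeta|^{u-n}$; there is no factorisation into $|\xi|^{u-n}$ times an average of $|\hat\nu|^2$. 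Your pushforward idea is then applied to the \emph{inner} $\tau$-integral $\int |\xi-(g^{-1})^T\zeta|^{u-n}\,d\tau g$: Lemma~\ref{lemma2} adapted to $S(n)$ (same fiber dimension, compact $\spt\tau$ so bounded dilation ratios) gives $\tau(\{g:|\xi-(g^{-1})^T\zeta|<\rho\})\lesssim(\rho/|\zeta|)^{\beta}$ with $\beta=\alpha-(n-1)(n-2)/2$, and the rest of Lemma~\ref{lemma1} goes through verbatim to yield the decoupled bound $\lesssim|\xi|^{s-n}|\zeta|^{t-n}$.

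Finally, your explanation of why the threshold drops from $s+t>n+1$ to $s+t>n$ is off. No spherical averages enter the proof of Lemma~\ref{lemma1} (those appear only in Section~\ref{decay}). The ``$+1$'' is pure dimension counting at the very last step: one needs $\alpha$ with $2n-s-t+(n-1)(n-2)/2<\alpha<\dim G$; for $G=O(n)$ this forces $s+t>n+1$, while for $G=S(n)$, since $\dim S(n)=\dim O(n)+1$, the constraint relaxes to $s+t>n$.
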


\section{Peliminaries}

The proof of Theorem \ref{theo1} is based on the relationship of the Hausdorff dimension to the energies of measures and their relations to the Fourier transform. For $A\subset\Rn$ (or $A\subset O(n)$) we denote by $\mathcal M(A)$ the set of non-zero Radon measures $\mu$ on $\Rn$ with compact support $\spt\mu\subset A$. The Fourier transform of $\mu$ is defined by
$$\widehat{\mu}(x)=\int e^{-2\pi ix\cdot y}\,d\mu y,~ x\in\Rn.$$
For $0<s<n$ the $s$-energy of $\mu$ is 
\begin{equation}\label{eq10}
I_s(\mu)=\iint|x-y|^{-s}\,d\mu x\,d\mu y=c(n,s)\int|\widehat{\mu}(x)|^2|x|^{s-n}\,dx.
\end{equation} 
The second equality is a consequence of Parseval's formula and the fact that the distributional Fourier transform of the Riesz kernel $k_s, k_s(x)=|x|^{-s}$, is a constant multiple of $k_{n-s}$, see, for example, \cite{M4}, Lemma 12.12, or \cite{M5}, Theorem 3.10. These books contain most of the back-ground material needed in this paper.

We then have for any Borel set $A\subset\Rn$, cf. Theorem 8.9 in \cite{M4},
\begin{equation}\label{eq3}
\dim A=\sup\{s:\exists \mu\in\mathcal M(A)\ \text{such that}\ I_s(\mu)<\infty\}.
\end{equation}

%Similarly to (\ref{eq10}) we also have
%\begin{equation}\label{eq12} 
%\iint((k_u\widehat{\sigma_{r}^{n-1}})\ast\mu)(x)\,d\mu x
%=c(n,u)\iint((k_{n-u}\ast\sigma_{r}^{n-1})(x)|\widehat{\mu}(x)|^2\,d\mathcal L^nx
%\end{equation}
%provided $0<u<n, r>0$ and $I_u(\mu)<\infty$. Perhaps the finiteness of the energy is not needed but it allows us to derive this easily  by approximating $k_u$ and $\sigma_r$ with smooth compactly supported functions.

Let $\nu\in\mathcal M(\Rn)$ and let $\psi_{\e}$ be an approximate identity: $\psi_{\e}(x)=\e^{-n}\psi(x/\e)$ where $\psi$ is a non-negative $C^{\infty}$-function with support in the unit ball and with integral $1$. Let $\nu_{\e}=\psi_{\e}\ast\nu$. Then the $\nu_{\e}$ converge weakly to $\nu$ when $\e\to 0$. Notice that $\widehat{\nu_{\e}}(x)=\widehat{\psi}(\e x)\widehat{\nu}(x)$. 

%We shall denote by $f_{\#}\lambda$ the push-forward of a measure $\lambda$  under a map  $f: f_{\#}\lambda(A)= \lambda(f^{-1}(A))$. 
By the notation $M\lesssim N$ we mean that $M\leq CN$ for some constant $C$. The dependence of $C$ should be clear from the context. By $C(a)$ and $c(a)$ we mean positive constants depending on the parameter $a$. The closed ball with centre $x$ and radius $r$ will be denoted by $B(x,r)$. 
 
\begin{lm}\label{lemma2} Let $\theta\in\mathcal M(O(n))$ and $\a>(n-1)(n-2)/2$. If $\theta(B(g,r))\leq r^{\a}$ for all $g\in O(n)$ and $r>0$, then for $x,z\in\Rn\setminus\{0\}, r>0$,
\begin{equation}\label{eq11}
\theta(\{g:|x-g(z)|< r\})\lesssim (r/|z|)^{\a-(n-1)(n-2)/2}.
\end{equation}
\end{lm}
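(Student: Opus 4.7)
The plan is to exploit the fact that the map $g \mapsto g(z)$ is a Riemannian submersion from $O(n)$ onto the sphere of radius $|z|$, with fibers that are (left) cosets of the stabilizer of $z$, each isometric to $O(n-1)$ and hence a smooth compact manifold of dimension $(n-1)(n-2)/2$. The set $\{g : |x-g(z)|<r\}$ is therefore a thin tube around such a coset, of transverse thickness $\lesssim r/|z|$ (measured on the unit sphere after rescaling), and its $\theta$-measure should be controlled by covering it with small balls in $O(n)$ and applying the hypothesis to each.

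First I would dispose of the trivial case: if $\bigl||x|-|z|\bigr|\geq r$, then the set is empty since $|g(z)|=|z|$, and the inequality holds automatically. Otherwise, by dividing $x$ and the ball radius by $|z|$, I work at the natural scale $r/|z|$ on the unit sphere: the intersection $B(x/|z|,r/|z|)\cap S^{n-1}$ is a spherical cap of intrinsic radius $\lesssim r/|z|$, hence covered by $O(1)$ balls of radius $r/|z|$ in $S^{n-1}$.

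Next I lift this cover to $O(n)$. The map $g\mapsto g(z)$ is $|z|$-Lipschitz in the operator norm, so the preimage of a single such cap is contained in the $(r/|z|)$-neighborhood of one fiber $\{g:g(z)=y\}$, which is a translate of an $O(n-1)$-subgroup. Locally, the bundle trivializes as a product (cap)$\times O(n-1)$, and since $O(n-1)$ is a compact manifold of dimension $(n-1)(n-2)/2$, it admits a cover by $\lesssim (|z|/r)^{(n-1)(n-2)/2}$ operator-norm balls of radius $r/|z|$. Multiplying the two cover counts, the whole set $\{g:|x-g(z)|<r\}$ is covered by $\lesssim (|z|/r)^{(n-1)(n-2)/2}$ balls of radius $\lesssim r/|z|$ in $O(n)$.

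Finally, applying the hypothesis $\theta(B(g,\rho))\leq \rho^\alpha$ to each ball in this cover gives
$$\theta(\{g:|x-g(z)|<r\})\;\lesssim\;(|z|/r)^{(n-1)(n-2)/2}\,(r/|z|)^{\alpha}\;=\;(r/|z|)^{\alpha-(n-1)(n-2)/2},$$
which is precisely \eqref{eq11}; the assumption $\alpha>(n-1)(n-2)/2$ only guarantees the exponent is positive so that the bound is useful for small $r$, but the derivation itself does not require it. The main technical point, and the step most deserving of care, is the covering estimate: one must justify the local product structure of $g\mapsto g(z)$ (essentially the fact that the stabilizer of $z$ is $O(n-1)$ and the projection is a Riemannian submersion with Lipschitz constant $|z|$), and verify that the dimension of $O(n-1)$ does give the claimed covering number $(|z|/r)^{(n-1)(n-2)/2}$ at scale $r/|z|$ in the operator norm.
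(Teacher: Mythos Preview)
Your proposal is correct and follows essentially the same route as the paper: reduce to scale $r/|z|$, recognize the set $\{g:|x-g(z)|<r\}$ as a tubular neighborhood of a coset of $O(n-1)$ inside $O(n)$, cover that $(n-1)(n-2)/2$-dimensional submanifold by $\lesssim (|z|/r)^{(n-1)(n-2)/2}$ balls of radius $\lesssim r/|z|$, and apply the Frostman hypothesis ball by ball. The paper's proof is more terse (it normalizes $|z|=1$ and then $|x|=1$ rather than phrasing things via the submersion), but the key covering count and its application are identical.
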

\begin{proof}
First we may clearly assume that $|z|=1$, and then also that $|x|=1$, because $|x-g(z)|< r$ implies $|x/|x|-g(z)|< 2r$. Then $O_{x,z}:=\{g\in O(n): g(z)=x\}$ can be identified with $O(n-1)$. Hence it is a smooth compact $(n-1)(n-2)/2$-dimensional submanifold of $O(n)$ which implies that it can be covered with roughly $r^{-(n-1)(n-2)/2}$ balls of radius $r$. If $g\in G$ satisfies $|x-g(z)|< r$, then $g$ belongs to the $r$-neighbourhood of $O_{x,z}$. The lemma follows from this.
\end{proof}

\section{Proof of Theorem \ref{theo1}}

The key to the proof of Theorem \ref{theo1} is the following energy estimate. 
For $\mu, \nu\in\mathcal M(\Rn), g\in O(n)$ and $z\in\Rn$, let $\nu_{\e}=\psi_{\e}\ast\nu$ as above and set

\begin{equation}\label{eq0}
\nu_{g,z,\e}(x)=\nu_{\e}(g^{-1}(x)-z),\ x\in\Rn.
\end{equation}

\begin{lm}\label{lemma1} Let $\beta>0$ and $\theta\in\mathcal M(O(n))$ be such that $\theta(O(n))\leq 1$ and for $x,z\in\Rn\setminus\{0\}, r>0,$ 
\begin{equation}\label{eq9}
\theta(\{g\in O(n):|x-g(z)|<r\})\leq (r/|z|)^{\beta}.
\end{equation}
Let $0<s,t<n, 0<u=s+t-n$ and $u>n-\beta$. Let $\mu, \nu \in \mathcal M(\Rn)$. Then
\begin{equation}\label{eq15}
\iint I_u(\nu_{g,z,\e}\mu)\,d\mathcal L^nz\,d\theta g \leq C(n,s,t)I_s(\mu)I_t(\nu).
\end{equation}

\end{lm}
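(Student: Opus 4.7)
The plan is to use the Fourier characterization of energy (\ref{eq10}) together with Plancherel in the translation variable $z$, reducing the estimate to a bound on a single rotationally averaged kernel, which is then controlled using hypothesis (\ref{eq9}).

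A direct computation using the orthogonality of $g$ gives $\widehat{\nu_{g,z,\e}}(\xi)=e^{-2\pi i\,\xi\cdot g(z)}\widehat{\nu_\e}(g^{-1}\xi)$. Since $\nu_{g,z,\e}\mu$ is a smooth function times a measure, $\widehat{\nu_{g,z,\e}\mu}=\widehat{\nu_{g,z,\e}}*\widehat\mu$, and after substitution and relabelling one obtains
\[
\widehat{\nu_{g,z,\e}\mu}(\xi)=\int e^{-2\pi i\,\eta\cdot z}\widehat{\nu_\e}(\eta)\widehat\mu(\xi-g(\eta))\,d\eta.
\]
For fixed $g$ and $\xi$ the right side is the Fourier transform in $z$ of $\eta\mapsto\widehat{\nu_\e}(\eta)\widehat\mu(\xi-g(\eta))$; Plancherel then gives $\int|\widehat{\nu_{g,z,\e}\mu}(\xi)|^2\,dz=\int|\widehat{\nu_\e}(\eta)|^2|\widehat\mu(\xi-g(\eta))|^2\,d\eta$. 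Multiplying by $|\xi|^{u-n}$, translating $\xi\mapsto\xi+g(\eta)$, integrating in $\theta$, and applying Fubini produce
\[
\iint I_u(\nu_{g,z,\e}\mu)\,dz\,d\theta(g)=c(n,u)\iint|\widehat\mu(\xi)|^2|\widehat{\nu_\e}(\eta)|^2\,T(\eta,\xi)\,d\eta\,d\xi,
\]
where $T(\eta,\xi):=\int_{O(n)}|g(\eta)+\xi|^{u-n}\,d\theta(g)$.

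The heart of the proof is the pointwise bound $T(\eta,\xi)\lesssim|\eta|^{t-n}|\xi|^{s-n}$. If $|\eta|\geq|\xi|/2$, the layer-cake formula together with $\theta(O(n))\leq 1$ and hypothesis (\ref{eq9}) yields
\[
T\leq(n-u)\int_0^\infty r^{u-n-1}\min\bigl(1,(r/|\eta|)^\beta\bigr)\,dr\lesssim|\eta|^{u-n},
\]
the convergence at $0$ being precisely where the assumption $u>n-\beta$ enters. Since $s-n<0$ and $|\xi|\leq 2|\eta|$, we have $|\xi|^{s-n}\gtrsim|\eta|^{s-n}$, so $T\lesssim|\eta|^{u-n}=|\eta|^{t-n}|\eta|^{s-n}\lesssim|\eta|^{t-n}|\xi|^{s-n}$. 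In the remaining case $|\eta|<|\xi|/2$, the triangle inequality gives $|g(\eta)+\xi|\geq|\xi|/2$, hence $T\lesssim|\xi|^{u-n}$; the analogous comparison using $t-n<0$ and $|\eta|<|\xi|$ finishes this case.

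Plugging the kernel bound back and using the Fourier formula (\ref{eq10}) for $I_s(\mu)$ and $I_t(\nu)$, together with the pointwise estimate $|\widehat{\nu_\e}(\eta)|\leq\|\widehat{\psi}\|_{\infty}|\widehat\nu(\eta)|$, completes the proof. The main obstacle is the kernel estimate: hypothesis (\ref{eq9}) delivers only the combined exponent $u-n$ on $|\eta|$, and one has to redistribute this into the separated exponents $t-n$ on $|\eta|$ and $s-n$ on $|\xi|$ demanded on the right-hand side of (\ref{eq15}); the case split above, exploiting the signs of $s-n$ and $t-n$, is exactly what makes this redistribution possible.
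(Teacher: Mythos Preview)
Your proof is correct and follows essentially the same route as the paper: both reduce the left side to the integral $c(n,u)\iint|\widehat\mu(\xi)|^2|\widehat{\nu_\e}(\eta)|^2\bigl(\int|\xi-g(-\eta)|^{u-n}\,d\theta g\bigr)\,d\xi\,d\eta$ and then bound the inner rotational average by $|\xi|^{s-n}|\eta|^{t-n}$ via the same case split and the layer-cake use of (\ref{eq9}). The only cosmetic difference is that you reach this integral by computing $\widehat{\nu_{g,z,\e}\mu}$ and applying Plancherel in $z$, whereas the paper expands $I_u$ in physical space, integrates in $z$ by Parseval, and then passes to the Fourier side of $\mu$ via the kernel $k_{u,g,z}(x)=|x|^{-u}e^{2\pi i x\cdot g(z)}$.
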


\begin{proof}
We may assume that $I_s(\mu)$ and $I_t(\nu)$ are finite. 
Define 
$$\tilde{\nu}_{g,x,\e}(z)=\nu_{\e}(g^{-1}(x)-z),\ z\in\Rn.$$
Then 
$$\widehat{\tilde{\nu}_{g,x,\e}}(z)=e^{-2\pi ig^{-1}(x)\cdot z}\widehat{\nu_{\e}}(-z).$$
Hence by Parseval's formula for $x,y\in\Rn, x\neq y,$
\begin{align*}
&\int\nu_{\e}(g^{-1}(x)-z)\nu_{\e}(g^{-1}(y)-z)\,d\mathcal L^nz\\
&=\int\widehat{\tilde{\nu}_{g,x,\e}}(z)\overline{\widehat{\tilde{\nu}_{g,y,\e}}(z)}\,d\mathcal L^nz\\
&=\int\widehat{\nu_{\e}}(-z)\overline{\widehat{\nu_{\e}}(-z)}e^{-2\pi ig^{-1}(x-y)\cdot z}d\mathcal L^nz\\
\end{align*}
It follows by Fubini's theorem	 that
\begin{align*}
&I:=\iint I_u(\nu_{g,z,\e}\mu)\,d\mathcal L^nz\,d\theta g\\
&=\iiiint k_u(x-y)\nu_{\e}(g^{-1}(x)-z)\nu_{\e}(g^{-1}(y)-z)\,d\mu x\,d\mu y\,d\mathcal L^nz\,d\theta g\\
&=\iiint k_u(x-y)\left(\int\nu_{\e}(g^{-1}(x)-z)\nu_{\e}(g^{-1}(y)-z)\,d\mathcal L^nz\right)\,d\mu x\,d\mu y\,d\theta g\\
&=\iiint k_u(x-y)\left(\int|\widehat{\nu_{\e}}(z)|^2 e^{2\pi ig^{-1}(x-y)\cdot z}\,d\mathcal L^nz\right)\,d\mu x\,d\mu y\,d\theta g\\
&=\iiint k_{u,g,z}\ast\mu(x)\,d\mu x|\widehat{\nu_{\e}}(z)|^2\,d\mathcal L^nz\,d\theta g,
%&=\iint k_u(x-y)\left(\int|\widehat{\nu_{\e}}(z)|^2\widehat{\sigma^{n-1}}(|z|(x-y))\,d\mathcal L^nz\right)\,d\mu x\,d\mu y\\
%&=\iint k_u(x-y)\widehat{\sigma_{|z|}^{n-1}}(x-y)\,d\mu x\,d\mu y|\widehat{\nu_{\e}}(z)|^2|z|^{1-n}\,d\mathcal L^nz\\
%&=\iint((k_u\widehat{\sigma_{|z|}^{n-1}})\ast\mu)(x)\,d\mu x|\widehat{\nu_{\e}}(z)|^2|z|^{1-n}\,d\mathcal L^nz\\
%&=c(n,u)\iint((k_{n-u}\ast\sigma_{|z|}^{n-1})(x)|\widehat{\mu}(x)|^2\,d\mathcal L^nx|\{\widehat{\nu_{\e}}(z)|^2|z|^{1-n}\,d\mathcal L^nz.
\end{align*}
where
\begin{equation*}
k_{u,g,z}(x)=|x|^{-u}e^{2\pi ig^{-1}(x)\cdot z}=|x|^{-u}e^{2\pi ix\cdot g(z)}
\end{equation*}
One checks by direct computation that the Fourier transform of $k_{u,g,z}$, in the sense of distributions, is given by 
\begin{equation*}
\widehat{k_{u,g,z}}(x)=
%\int e^{-2\pi ix\cdot y}e^{2\pi ig^{-1}(y)\cdot z}|y|^{-u}\,d\mathcal L^ny\\&=\int e^{-2\pi i(x-g(z)\cdot y}|y|^{-u}\,d\mathcal L^ny
c(n,u)|x-g(z)|^{u-n}.
\end{equation*}
It follows that
$$\iint k_{u,g,z}\ast\mu\,d\mu=\int\widehat{k_{u,g,z}}|\widehat{\mu}|^2\,d\mathcal L^n=c(n,u)\int|x-g(z)|^{u-n}|\widehat{\mu}(x)|^2\,d\mathcal L^nx.$$
As $I_u(\mu)<\infty$, this is easily checked approximating $\mu$ with $\psi_{\e}\ast\mu$ and using the Lebesgue dominated convergence theorem. 
Thus 
\begin{equation}\label{eq8}
I=c(n,u)\iiint |x-g(z)|^{u-n}\,d\theta g|\widehat{\mu}(x)|^2|\widehat{\nu_{\e}}(z)|^2\,d\mathcal L^nx\,d\mathcal L^nz.
\end{equation}
We first observe that if $|x|\geq 2|z|$, then
$$\int |x-g(z)|^{u-n}\,d\theta g\leq \theta(O(n))2^{n-u}|x|^{u-n}\leq 2^{2n}|x|^{s-n}|z|^{t-n}.$$
Similarly if $|z|\geq 2|x|$, then
$$\int |x-g(z)|^{u-n}\,d\theta g\leq \theta(O(n))2^{n-u}|z|^{u-n}\leq 2^{2n}|x|^{s-n}|z|^{t-n}.$$
Suppose then that $|z|/2\leq|x|\leq2|z|.$ Then by the assumption
\begin{align*}
&\int|x-g(z)|^{u-n}\,d\theta g = \int_0^{\infty}\theta(\{g:|x-g(z)|^{u-n}>\lambda\})\,d\lambda\\
&=(n-u)\int_0^{\infty}\theta(\{g:|x-g(z)|<r\})r^{u-n-1}\,dr\\
&\lesssim \int_{0}^{|z|}(r/|z|)^{\beta}r^{u-n-1}\,dr+\int_{|z|}^{\infty}r^{u-n-1}\,dr\\
&\approx |z|^{u-n}\approx |x|^{s-n}|z|^{t-n},
\end{align*}
since $\beta+u-n>0$. 
It follows that  
\begin{equation}\label{eq13}
I\lesssim \iint |x|^{s-n}|z|^{t-n}|\widehat{\mu}(x)|^2|\widehat{\nu_{\e}}(y)|^2\,dx\,dy\lesssim I_s(\mu)I_t(\nu),
\end{equation}
 as required.

\end{proof}

Next we show that, with $\theta$ as above, for $\theta\times\mathcal L^n$ almost all $(g,z)$ the measures $\nu_{g,z,\e}\mu$ converge weakly as $\e\to 0$. It is immediate that for almost all $(g,z)$ this takes place through some sequence $(\e_j)$, depending on $(g,z)$, but we would at least need one sequence which is good for almost all $(g,z)$. The proof of the following theorem was inspired by an argument of Kahane in \cite{K}.

\begin{thm}\label{theo5}
Let $s,t$ and $u$ be positive numbers with $u=s+t-n>0$ and let $\mu, \nu\in\mathcal M(\Rn)$ with $I_s(\mu)<\infty$ and $I_t(\nu)<\infty$. Let $\psi_{\e}$ be an approximate identity and $\nu_{\e}=\psi_{\e}\ast\nu$. For $g\in O(n)$ and $z\in\Rn$, let  $\nu_{g,z,\e}$ be as in (\ref{eq0}). Finally, let $\theta\in\mathcal M(O(n))$ be as in Lemma \ref{lemma1}. Then for $\theta\times\mathcal L^n$ almost all $(g,z)$, as $\e\to 0$, the measures 
$\nu_{g,g^{-1}(z),\e}\mu$ converge weakly to a measure $\lambda_{g,z}$ with the properties
\begin{itemize}
\item[(a)] $$\spt\lambda_{g,z}\subset\spt\mu\cap(g(\spt\nu)+z),$$
\item[(b)]$$\int\lambda_{g,z}(\Rn)\,d\mathcal L^nz = \mu(\Rn)\nu(\Rn)\ \text{for}\ \theta\ \text{almost all}\ g\in O(n),$$
\item[(c)]
$$\iint I_u(\lambda_{g,z})\,d\mathcal L^nz\,d\theta g\leq C(n,s,t)I_s(\mu)I_t(\nu).$$ 
\end{itemize} 
\end{thm}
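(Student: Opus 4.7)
The strategy is to show that the family $\nu_{g,g^{-1}(z),\e}\mu$ is Cauchy in the negative Sobolev norm $I_u^{1/2}$ for $\theta\times\mathcal L^n$-almost every $(g,z)$ along some fixed sequence $\e_j\searrow 0$. The key is to apply Lemma~\ref{lemma1} to the difference $\nu_\e - \nu_\delta$. Since $\widehat{\nu_\e - \nu_\delta}(\xi) = (\widehat\psi(\e\xi)-\widehat\psi(\delta\xi))\widehat\nu(\xi)$, repeating the Fourier computation in the proof of Lemma~\ref{lemma1} with $\nu_\e$ replaced by $\nu_\e-\nu_\delta$ gives
\[
\iint I_u\bigl((\nu_\e-\nu_\delta)_{g,z}\mu\bigr)\,d\mathcal L^n z\,d\theta g \ \lesssim\ I_s(\mu)\int\bigl|\widehat\psi(\e\xi)-\widehat\psi(\delta\xi)\bigr|^2|\widehat\nu(\xi)|^2|\xi|^{t-n}\,d\xi,
\]
and the right-hand side tends to $0$ as $\e,\delta\to 0$ by dominated convergence: the integrand is dominated by $4|\widehat\nu(\xi)|^2|\xi|^{t-n}\in L^1$ (since $I_t(\nu)<\infty$) and tends pointwise to $0$ because $\widehat\psi(0)=1$.

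Next, I would choose $\e_j\searrow 0$ so rapidly that the double integral above with $(\e,\delta)=(\e_j,\e_{j+1})$ is at most $4^{-j}$. Chebyshev and Borel--Cantelli (applied after localising $z$ to a fixed ball containing all relevant translation parameters, making the measure space finite) then give $I_u^{1/2}((\nu_{\e_j}-\nu_{\e_{j+1}})_{g,z}\mu)\le 2^{-j/2}$ eventually, for $\theta\times\mathcal L^n$-a.e.\ $(g,z)$. Hence the sequence $\sigma_j^{g,z}:=\nu_{g,g^{-1}(z),\e_j}\mu$ is Cauchy in the Hilbert norm $I_u^{1/2}$ (the change $z\mapsto g^{-1}(z)$ preserves $\mathcal L^n$ and matches the parametrization of Lemma~\ref{lemma1}), and converges in norm to a distribution $\lambda_{g,z}$ with $I_u(\lambda_{g,z})<\infty$. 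To upgrade this to weak convergence of measures, each $\sigma_j^{g,z}$ is non-negative and supported in the fixed compact set $\spt\mu$; the elementary mass-energy bound $\sigma(\Rn)^2\le I_u(\sigma)\diam(\spt\sigma)^u$ together with the convergence $I_u(\sigma_j^{g,z})\to I_u(\lambda_{g,z})$ gives $\sup_j\sigma_j^{g,z}(\Rn)<\infty$; Prokhorov then supplies a weak-$*$ limit, which by uniqueness of the distributional limit must equal $\lambda_{g,z}$.

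The three stated properties then follow. For (a), if $\phi\in C_c(\Rn)$ has support disjoint from $\spt\mu\cap(g(\spt\nu)+z)$, then $\spt\sigma_j^{g,z}\subset\spt\mu\cap(z+g(\spt\nu)+B(0,\e_j))$ and positivity of $\dist(\spt\phi\cap\spt\mu,\,z+g(\spt\nu))$ force $\int\phi\,d\sigma_j^{g,z}=0$ for all sufficiently large $j$. For (c), norm convergence gives $I_u(\sigma_j^{g,z})\to I_u(\lambda_{g,z})$, and Fatou in $(g,z)$ combined with Lemma~\ref{lemma1} yields $\iint I_u(\lambda_{g,z})\,d\mathcal L^nz\,d\theta g\le C(n,s,t)I_s(\mu)I_t(\nu)$. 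For (b), Fubini gives $\int\sigma_\e^{g,z}(\Rn)\,d\mathcal L^n z=\mu(\Rn)\nu(\Rn)$ for every $\e>0$; writing $\sigma_\e^{g,z}(\Rn)=(\tilde\psi_\e\ast\eta_g)(g^{-1}z)$ for a fixed finite Radon measure $\eta_g$ (a reflection-convolution of $\mu$ and $\nu$ through $g$), the identity passes to the limit because $\eta_g$ is absolutely continuous for $\theta$-a.e.\ $g$ (as a consequence of the finite-energy hypotheses), so the $\e$-smoothings converge in $L^1(dz)$.

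The main obstacle is the passage from Sobolev/distributional convergence to weak convergence of measures together with the preservation of the mass identity in (b); both depend on the compactness of $\spt\mu$, the mass-energy inequality (for tightness), and the absolute continuity of the fixed measure $\eta_g$ for typical $g$.
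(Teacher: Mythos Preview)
Your approach is correct in outline but genuinely different from the paper's, and it delivers a slightly weaker conclusion.

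\textbf{Comparison of methods.} The paper does \emph{not} use a Cauchy-in-energy argument. Instead it recognizes directly that, for each fixed test function $\phi\in C_0^+(\Rn)$ and $g\in O(n)$,
\[
\int \nu_{g,z,\e}\phi\,d\mu \;=\; \mu_{\phi,g}\ast\nu\ast\psi_\e(-z),
\]
so the dependence on $z$ is exactly a mollification of the fixed measure $\mu_{\phi,g}\ast\nu$. Lemma~\ref{lemma1} (applied with $\phi\,\mu$ in place of $\mu$) gives a uniform $L^2(dz)$ bound on these mollifications along some sequence for $\theta$-a.e.\ $g$; hence $\mu_{\phi,g}\ast\nu\in L^2$ and the mollifications converge a.e.\ in $z$ \emph{for the full continuum} $\e\to 0$. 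A countable dense family of $\phi$ then pins down a single exceptional set, and Riesz representation yields $\lambda_{g,z}$. The advantage of this route is that it gives convergence for every $\e\to 0$, as the theorem actually states, and it makes (b) immediate: $\mu_{\phi_0,g}\ast\nu\in L^2\subset L^1_{loc}$, so total mass is preserved in the limit.

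Your Cauchy/Borel--Cantelli argument is a legitimate alternative and perhaps more transportable to settings without such a clean convolution structure, but as written it only yields convergence along a fixed fast sequence $\e_j\searrow 0$, not for all $\e\to 0$. That suffices for the application to Theorem~\ref{theo1} (and the paper even remarks before the theorem that a single common sequence would be enough), but it is weaker than what is asserted. To upgrade to the continuum limit you would still need to invoke the convolution identification above.

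\textbf{The handling of (b).} Your justification that $\eta_g$ is absolutely continuous ``as a consequence of the finite-energy hypotheses'' is the one genuinely underargued step. It is true, but the reason is essentially the paper's: the mass--energy inequality you already used, combined with Lemma~\ref{lemma1}, gives $\int \sigma_\e^{g,z}(\Rn)^2\,d\mathcal L^n z \lesssim \diam(\spt\mu)^u \int I_u(\sigma_\e^{g,z})\,d\mathcal L^n z<\infty$ for $\theta$-a.e.\ $g$, so the mollifications of $\eta_g$ are $L^2$-bounded and $\eta_g\in L^2$. You should state this explicitly; without it the $L^1(dz)$ passage to the limit in (b) is unsupported.
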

\begin{proof}
If the convergence takes place, the support property (a) is clear. Using the change of variable from $z$ to $g^{-1}(z)$ in the appropriate places, it is then sufficient to show that for $\theta\times\mathcal L^n$ almost all $(g,z)$, as $\e\to 0$, the measures 
$\nu_{g,z,\e}\mu$ converge weakly to a measure $\tilde{\lambda}_{g,z}$ such that (b) and (c) hold with $\lambda_{g,z}$ replaced by $\tilde{\lambda}_{g,z}$. 

Let $\phi\in C^+_0(\Rn)$. Then by Lemma \ref{lemma1},
$$\iint (\int\nu_{g,z,\e}\phi\,d\mu)^2\,d\mathcal L^nz\,d\theta g\lesssim I_s(\mu)I_t(\nu)<\infty.$$ 
Hence by Fatou's lemma
$$\int \left(\liminf_{\e\to 0}\int(\int\nu_{g,z,\e}\phi\,d\mu)^2\,d\mathcal L^nz\right)\,d\theta g\lesssim I_s(\mu)I_t(\nu)<\infty.$$
Thus for $\theta$ almost all $g$ there is a sequence $(\e_j)$ tending to $0$ such that 
$$\sup_j\int(\int\nu_{g,z,\e_j}\phi\,d\mu)^2\,d\mathcal L^nz<\infty.$$
On the other hand, defining the measure $\mu_{\phi,g}$ by $\int h\,d\mu_{\phi,g}=\int h(-g^{-1}(x))\phi(x)\,d\mu x$, we have
$$\int\nu_{g,z,\e}\phi\,d\mu=\int\nu_{\e}(g^{-1}(x)-z)\phi(x)\,d\mu x=\mu_{\phi,g}\ast\nu\ast\psi_{\e}(-z),$$
and the measures $\mu_{\phi,g}\ast\nu\ast\psi_{\e}$ converge weakly to $\mu_{\phi,g}\ast\nu$. Consequently, $\mu_{\phi,g}\ast\nu$ is an $L^2$ function on $\Rn$ and the convergence takes place almost everywhere. It follows now that for $\theta$ almost all $g\in O(n)$ and for every $\phi\in C^+_0(\Rn)$ the finite limit
\begin{equation}\label{eq2}
L_{g,z}\phi:=\lim_{\e\to 0}\int\nu_{g,z,\e}\phi\,d\mu=\lim_{\e\to 0}\mu_{\phi,g}\ast\nu\ast\psi_{\e}(-z)=\mu_{\phi,g}\ast\nu(-z)
\end{equation}
exists for almost all $z\in\Rn$. Let $\mathcal D$ be a countable dense subset of $C^+_0(\Rn)$ containing a function $\phi_0$ which is $1$ on the support of $\mu$. Then there is a set $E$ of measure zero such that (\ref{eq2}) holds for all $z\in\Rn\setminus E$ for all $\phi\in \mathcal D$, the exceptional set is indenpendent of $\phi$. Applying (\ref{eq2}) to $\phi_0$ we see that
$$\sup_{\e>0}\int\nu_{g,z,\e}\,d\mu<\infty.$$
Then by the Cauchy criterion the denseness of $\mathcal D$ yields that whenever $z\in\Rn\setminus E$, there is the finite limit $L_{g,z}\phi:=\lim_{\e\to 0}\int\nu_{g,z,\e}\phi\,d\mu$ for all $\phi\in C^+_0(\Rn)$. Hence by the Riesz representation theorem the positive linear functional $L_{g,z}$ corresponds to a Radon measure $\tilde{\lambda}_{g,z}$ to which the measures $\nu_{g,z,\e}\mu$ converge weakly.

The claim (b) follows from (\ref{eq2}):
\begin{align*}
\int\lambda_{g,z}(\Rn)\,d\mathcal L^nz = &\int L_{g,z}\phi_0\,d\mathcal L^nz 
= \int\mu_{\phi_0,g}\ast\nu(-z)\,d\mathcal L^n\\
&=\mu_{\phi_0,g}(\Rn)\nu(\Rn)=\mu(\Rn)\nu(\Rn).	
\end{align*}
The claim (c) follows from Lemma \ref{lemma1}, Fatou's lemma and the lower semicontinuity of the energy-integrals under the weak convergence.
\end{proof}

\begin{proof}[Proof of Theorem \ref{theo1}]
Theorem \ref{theo1} follows from Lemma \ref{lemma1} and Theorem \ref{theo5}: Let 
$$G=\{g\in O(n): \mathcal L^n(\{z\in\Rn: \dim A\cap (g(B)+z))\geq s+t-n\})=0\}.$$
Then $G$ is a Borel set. We leave checking this to the reader. It is a bit easier when $A$ and $B$ are compact. We may assume the compactness since $A$ and $B$ as in the theorem contain compact subsets with positive measure, cf \cite{Fe}, 2.10.48. Suppose, contrary to what is claimed, that $\dim G > 2n-s-t+(n-1)(n-2)/2.$ Let 
$\dim G > \a > 2n-s-t + (n-1)(n-2)/2$. Then by Frostman's lemma, cf. \cite{M4}, Theorem 8.8, and Lemma \ref{lemma2} there is $\theta\in\mathcal M(G)$ satisfying (\ref{eq9}) with $\beta=\a-(n-1)(n-2)/2>2n-s-t$. 
By Frostman's lemma there are $\mu\in\mathcal M(A)$ and $\nu\in\mathcal M(B)$ such that $\mu(B(x,r))\leq r^s$ and $\nu(B(x,r))\leq r^t$ for all balls $B(x,r)$. Then by easy estimation, for example, as in the beginning of Chapter 8 in \cite{M4}, $I_{s'}(\mu)<\infty$ and $I_{t'}(\nu)<\infty$ for $0<s'<s$ and $0<t'<t$. By Theorem \ref{theo5}(b) the set $E_g=\{z:\lambda_{g,z}(\Rn)>0\}$ has positive Lebesgue measure for $\theta$ almost all $g$. It then follows from Theorem \ref{theo5}(a) and (c) and (\ref{eq3}) that for $\theta$ almost all  $g$,~ $\dim A\cap(g(B)+z)\geq s+t-n$ for almost all $z\in E_g$. This contradicts the definition of $G$ and the fact that $\theta$ has support in $G$. 
\end{proof}

%\begin{rem}\label{remark}
%The proof of Theorem \ref{theo1} shows that if 
% $s+t > n+1$ and $A$ and $B$ be Borel subsets of $\Rn$ with $\mathcal H^s(A)>0$ and $\mathcal H^t(B)>0$, then for $0<u < \dim A + \dim B-n$, 
%
%\begin{equation}\label{eq14}
%\mathcal L^n(\{z\in\Rn: \dim A\cap (g(B)+z))\geq u\})>0
%\end{equation}
%for $g\in O(n)$ outside an exceptional set of dimension at most $n-u+(n-1)(n-2)/2$. A few small technical changes that are needed will be discussed in the proof of the following theorem.
%\end{rem}

\section{Intersections and the decay of spherical averages}\label{decay}

For $\mu\in\mathcal M(\Rn)$ define the spherical averages
$$\sigma(\mu)(r)=r^{1-n}\int_{S(r)}|\widehat{\mu}(x)|^2\,d\sigma_r^{n-1}x, r>0,$$
where $\sigma_r^{n-1}$ is the surface measure on the sphere $S(r)=\{x\in\Rn:|x|=r\}$. 
Notice that if $\sigma(\mu)(r)\lesssim r^{-\gamma}$ for $r>0$ and for some $\gamma>0$, then $I_s(\mu)<\infty$ for $0<s<\gamma$. We now prove that under such decay condition we can improve Theorem \ref{theo1}:

\begin{thm}\label{theo3}
Let $t,\gamma$ and $\Gamma$ be positive numbers with $t,\gamma<n$. Let $\mu, \nu\in\mathcal M(\Rn)$ with $\sigma(\mu)(r)\leq \Gamma r^{-\gamma}$ for $r>0, I_{\gamma}(\mu)<\infty$ and $I_t(\nu)<\infty$. 

(a) If $\gamma+t>2n-1$, then 
\begin{equation}\label{eq16}
\mathcal L^n(\{z\in\Rn: \dim \spt\mu\cap (\spt\nu+z)\geq \gamma+t-n\})>0.
\end{equation}
(b) If $\gamma+t\leq 2n-1$, then there is $E\subset O(n)$ with 
$$\dim E\leq 2n-1-\gamma-t+(n-1)(n-2)/2$$ 
such that for  $g\in O(n)\setminus E$,
\begin{equation}\label{eq17}
\mathcal L^n(\{z\in\Rn: \dim \spt\mu\cap (g(\spt\nu)+z))\geq \gamma+t-n\})>0.
\end{equation}
\end{thm}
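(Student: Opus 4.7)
The plan is to follow the two-step strategy of Theorem \ref{theo1}: first prove a refined version of the energy estimate Lemma \ref{lemma1} that exploits the spherical decay hypothesis on $\widehat{\mu}$, then combine it with (the analog of) Theorem \ref{theo5} and, for part (b), a Frostman/contradiction argument. The novelty is a sharper bound in the ``critical range'' $|x|\approx|g(z)|$ of the Fourier-side identity (\ref{eq8}); this is where the improvement by $1$ in the dimension exponent is earned.

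Concretely, I would prove the following refinement of Lemma \ref{lemma1}: for $\theta\in\mathcal M(O(n))$ with $\theta(O(n))\le 1$ satisfying (\ref{eq9}) with some $\beta\ge 0$ such that $\beta>n-u-1$ (where $u=\gamma+t-n$), one has
\[
\iint I_u(\nu_{g,z,\e}\mu)\,d\mathcal L^n z\,d\theta g \;\lesssim\; \bigl(I_\gamma(\mu)+\Gamma\bigr)\,I_t(\nu).
\]
Starting from (\ref{eq8}) I would split the $x$-integration into $\{|x|\ge 2|z|\}$, $\{|x|\le|z|/2\}$ and the critical range $\{|z|/2\le|x|\le 2|z|\}$. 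The first two regions are handled exactly as in Lemma \ref{lemma1} by the pointwise comparison $|x-g(z)|^{u-n}\lesssim |x|^{\gamma-n}|z|^{t-n}$ together with $\theta(O(n))\le 1$, contributing $\lesssim I_\gamma(\mu)I_t(\nu)$. In the critical range I write $|x-g(z)|^{u-n}=(n-u)\int_0^\infty r^{u-n-1}\khii_{|x-g(z)|<r}\,dr$ and apply Fubini: for $r\le|z|/2$ the set $\{x:|x-g(z)|<r\}$ is contained in the radial shell $\{|x|\in[|z|-r,|z|+r]\}$, hence by (\ref{eq9})
\[
\iint_{|x|\approx|z|,\,|x-g(z)|<r}\!|\widehat{\mu}(x)|^2\,dx\,d\theta g \;\le\; (r/|z|)^\beta\!\!\int_{|z|-r}^{|z|+r}\!\!\int_{S(\rho)}\!|\widehat{\mu}|^2\,d\sigma_\rho^{n-1}\,d\rho \;\lesssim\; \Gamma (r/|z|)^{\beta+1}|z|^{n-\gamma},
\]
where the second estimate uses the spherical-average hypothesis $\int_{S(\rho)}|\widehat\mu|^2\,d\sigma_\rho^{n-1}\le\Gamma\rho^{n-1-\gamma}$ on a shell of thickness $2r$. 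For $r>|z|/2$ the trivial estimate $\lesssim\Gamma|z|^{n-\gamma}$ suffices. Integrating $r^{u-n-1}\,dr$ gives the critical-range contribution $\lesssim\Gamma|z|^{t-n}$, convergent precisely under $\beta+1+u-n>0$, i.e.\ $\beta>n-u-1$; integrating against $|\widehat{\nu_\e}(z)|^2\,dz\lesssim|\widehat{\nu}(z)|^2\,dz$ yields the $\Gamma I_t(\nu)$ term.

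Given the refined lemma, the proof of the analog of Theorem \ref{theo5} goes through verbatim, supplying for $\theta\times\mathcal L^n$-a.e.\ $(g,z)$ a weak limit $\lambda_{g,z}$ of $\nu_{g,g^{-1}(z),\e}\mu$ with $\spt\lambda_{g,z}\subset\spt\mu\cap(g(\spt\nu)+z)$, finite $u$-energy on average, and $\int\lambda_{g,z}(\Rn)\,d\mathcal L^n z=\mu(\Rn)\nu(\Rn)$ for $\theta$-a.e.\ $g$. Part (a), under $\gamma+t>2n-1$ (equivalently $u>n-1$), follows by taking $\theta=\delta_{\mathrm{id}}$: condition (\ref{eq9}) then holds trivially with $\beta=0$ and the requirement $\beta>n-u-1$ is met, so for a positive-measure set of $z$ we get $\lambda_z\neq 0$ and hence via (\ref{eq3}) that $\dim\spt\mu\cap(\spt\nu+z)\ge u$, proving (\ref{eq16}). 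Part (b) proceeds by contradiction as in the proof of Theorem \ref{theo1}: if the exceptional set $E$ satisfies $\dim E>(n-1)(n-2)/2+(2n-1-\gamma-t)$, Frostman's lemma supplies $\theta\in\mathcal M(E)$ with $\theta(B(g,r))\le r^\alpha$ for some $\alpha>(n-1)(n-2)/2+(2n-1-\gamma-t)$, and Lemma \ref{lemma2} yields (\ref{eq9}) with $\beta=\alpha-(n-1)(n-2)/2>n-u-1$; the refined energy estimate and Theorem \ref{theo5}-analog then force $\dim\spt\mu\cap(g(\spt\nu)+z)\ge u$ on a positive-measure set of $z$ for $\theta$-a.e.\ $g\in E$, contradicting the definition of $E$.

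The main technical point, and the source of the $-1$ in the exponent $2n-1-\gamma-t$, is the critical-range estimate. Lemma \ref{lemma1} would use $\int_{|x|\approx|z|}|\widehat{\mu}|^2\,dx\lesssim\Gamma|z|^{n-\gamma}$ on the full shell and recover only the $(r/|z|)^\beta$ factor, forcing $\beta>n-u$; the spherical hypothesis localises the relevant Fourier mass to the thin shell $[|z|-r,|z|+r]$ of width comparable to $r$, contributing an extra factor $r/|z|$ in the Fubini estimate and effectively replacing $\beta$ by $\beta+1$ in the convergence criterion. Once this improvement is in place, the rest of the argument is parallel to the proof of Theorem \ref{theo1}.
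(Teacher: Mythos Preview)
Your proposal is correct and follows essentially the same approach as the paper: both proofs reduce to a refined version of Lemma \ref{lemma1} in which the critical-range estimate on (\ref{eq8}) is upgraded using the spherical-average hypothesis, exploiting $|x-g(z)|<r\Rightarrow ||x|-|z||<r$ to confine the Fourier mass of $\mu$ to a shell of thickness $\approx r$ and thereby gain the extra factor $r/|z|$ that relaxes $\beta>n-u$ to $\beta>n-u-1$. The only differences are organisational: the paper first integrates out $\theta$ to get $\int|x-g(z)|^{u-n}\,d\theta g\lesssim ||x|-|z||^{\beta+u-n}|z|^{-\beta}$ and then performs a dyadic split in $||x|-|z||$ (with a separate treatment of $|z|\le 2$), whereas you keep the layer-cake in $r$ and integrate continuously---a slightly cleaner bookkeeping that also lets you handle (a) and (b) uniformly by taking $\theta=\delta_{\mathrm{id}}$, $\beta=0$.
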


\begin{proof} Let $u=\gamma+t-n$. 
As above, we only need to show that the the conclusion of Lemma \ref{lemma1} holds under the present assumptions, but now the upper bound in (\ref{eq15}) will be a constant involving $\Gamma, I_t(\nu),I_{\gamma}(\mu), I_u(\mu),\mu(\Rn)$ and $\nu(\Rn)$. 
%As  $u$ could be strictly smaller than $\gamma+t-n$, the extra terms enter from integrations inside the unit ball. When, for example $|x|<1<|z|/2$, we estimate $|x+g(z)|^{u-n}\lesssim |z|^{u-n}$. 
For the statement (a) there is no $\theta$ integration (or $\theta$ is the Dirac measure at the identity map) and $n-1<u<n$, and for the statement (b) $\theta$ satisfies (\ref{eq9}) with some $\beta$ with $n-1-u<\beta<n-u$. 

We again have (\ref{eq8}). The integration over $|z|\leq 2$ is easily controlled for example by
\begin{align*}
&\iiint_{|z|\leq 2} |x-g(z)|^{u-n}\,d\theta g|\widehat{\mu}(x)|^2|\widehat{\nu_{\e}}(z)|^2\,d\mathcal L^nx\,d\mathcal L^nz\\
&\lesssim \mu(\Rn)^2\nu(\Rn)^2\iint_{|z|\leq 2, |x|\leq 3} |x-g(z)|^{u-n}\,d\mathcal L^nx\,d\mathcal L^nz+
\nu(\Rn)^2\int |x|^{u-n}|\widehat{\mu}(x)|^2\,d\mathcal L^nx\\
&\lesssim (\mu(\Rn)^2+I_u(\mu))\nu(\Rn)^2.
\end{align*}
For the part where $|x|>2|z|$ or $|z|>2|x|$ we can argue as before. 

To prove (a), suppose $n-1<u<n$. Then it suffices to show 
$$\iint_{|z|/2\leq |x|\leq 2|z|, |z|>2} |x-z|^{u-n}|\widehat{\mu}(x)|^2|\widehat{\nu_{\e}}(z)|^2\,d\mathcal L^nx\,d\mathcal L^nz\lesssim \Gamma I_t(\nu).$$
Since $-1<u-n<0, n-1-\gamma\leq t-n$ and $|x-z|\geq||x|-|z||$, the integral over the part $||x|-|z||\leq 1, |z|>1,$ can be estimated by
\begin{align*}
&\iint_{||x|-|z||\leq 1, |z|>2} |x-z|^{u-n}|\widehat{\mu}(x)|^2|\widehat{\nu_{\e}}(z)|^2\,d\mathcal L^nx\,d\mathcal L^n\,z\\
&\leq \int_{|z|>2}\int_{|z|-1}^{|z|+1}|r-|z||^{u-n}\int_{S(r)}|\widehat{\mu}(x)|^2\,d\sigma_r^{n-1}x\,dr|\widehat{\nu_{\e}}(z)|^2\,d\mathcal L^nz\\
&\lesssim \Gamma\int_{|z|>2} |z|^{n-1-\gamma}|\widehat{\nu_{\e}}(z)|^2\,d\mathcal L^nz\\
&\leq \Gamma\int |z|^{t-n}|\widehat{\nu_{\e}}(z)|^2\,d\mathcal L^nz\lesssim \Gamma I_t(\nu).
\end{align*}
For the remaining part we have 
\begin{align*}
&\iint_{||x|-|z|| > 1, 1<|z|/2\leq|x|\leq 2|z|} |x-z|^{u-n}|\widehat{\mu}(x)|^2|\widehat{\nu_{\e}}(z)|^2\,d\mathcal L^nx\,d\mathcal L^nz\\
&\leq\int\sum_{1\leq2^j\leq3|z|} \int_{2^j\leq||x|-|z||\leq 2^{j+1},|z|/2\leq|x|\leq 2|z|}||x|-|z||^{u-n}|\widehat{\mu}(x)|^2|\widehat{\nu_{\e}}(z)|^2\,d\mathcal L^nx\,d\mathcal L^nz\\
&\lesssim \int\sum_{1\leq2^j\leq3|z|} 2^{j(u-n)}\int_{2^j\leq||x|-|z||\leq 2^{j+1},|z|/2\leq|x|\leq 2|z|}|\widehat{\mu}(x)|^2\,d\mathcal L^nx|\widehat{\nu_{\e}}(z)|^2\,d\mathcal L^nz\\
&= \int\sum_{1\leq2^j\leq3|z|} 2^{j(u-n)}\int_{2^j\leq|r-|z||\leq 2^{j+1},|z|/2\leq r\leq 2|z|}\int_{S(r)}|\widehat{\mu}(x)|^2\,d\sigma_r^{n-1}x\,dr|\widehat{\nu_{\e}}(z)|^2\,d\mathcal L^nz\\
&\lesssim \Gamma \int\sum_{1\leq2^j\leq3|z|} 2^{j(u-n)}2^j|z|^{n-1-\gamma}|\widehat{\nu_{\e}}(z)|^2\,d\mathcal L^nz\\
&\lesssim \Gamma\int |z|^{u-\gamma}|\widehat{\nu_{\e}}(z)|^2\,d\mathcal L^nz\lesssim \Gamma I_t(\nu).
\end{align*}

We establish the statement (b) by showing that 
$$\iiint_{|z|/2\leq |x|\leq 2|z|,|z|>2} |x-g(z)|^{u-n}\,d\theta g|\widehat{\mu}(x)|^2|\widehat{\nu_{\e}}(z)|^2\,d\mathcal L^nx\,d\mathcal L^nz\lesssim \Gamma I_t(\nu),$$
where $\theta\in\mathcal M(O(n))$ is as in Lemma \ref{lemma1} with $n-1-u<\beta<n-u$. 
We first have as in the proof of Lemma \ref{lemma1}
\begin{align*}
&\int|x-g(z)|^{u-n}\,d\theta g =(n-u)\int_0^{\infty}\theta(\{g:|x-g(z)|<r\})r^{u-n-1}\,dr\\
&\lesssim \int_{||x|-|z||}^{\infty}(r/|z|)^{\beta}r^{u-n-1}\,dr=(n-\beta-u)||x|-|z||^{\beta+u-n}|z|^{-\beta},
\end{align*} 
because $\theta(\{g:|x-g(z)|<r\})=0$ if $r<||x|-|z||$. 
Using $-\beta+n-1-\gamma<t-n$  the integral over the part $||x|-|z||\leq 1$ can be estimated by
\begin{align*}
&\iiint_{||x|-|z||\leq 1,|z|>2} |x-g(z)|^{u-n}\,d\theta g|\widehat{\mu}(x)|^2|\widehat{\nu_{\e}}(x)|^2\,d\mathcal L^nx\,d\mathcal L^n\,z\\
&\lesssim\int_{|z|>2} \int_{|z|-1}^{|z|+1}|r-|z||^{\beta+u-n}|z|^{-\beta}\int_{S(r)}|\widehat{\mu}(x)|^2\,d\sigma_r^{n-1}x\,dr|\widehat{\nu_{\e}}(z)|^2\,d\mathcal L^nz\\
&\lesssim \Gamma\int_{|z|>2} |z|^{-\beta+n-1-\gamma}|\widehat{\nu_{\e}}(z)|^2\,d\mathcal L^nz\\
&\leq \Gamma\int |z|^{t-n}|\widehat{\nu_{\e}}(z)|^2\,d\mathcal L^nz
\lesssim \Gamma I_t(\nu).
\end{align*}
For the remaining part we have 

\begin{align*}
&\iiint_{||x|-|z|| > 1, 1<|z|/2\leq|x|\leq 2|z|} |x-g(z)|^{u-n}\,d\theta g|\widehat{\mu}(x)|^2|\widehat{\nu_{\e}}(z)|^2\,d\mathcal L^nx\,d\mathcal L^nz\\
&\leq\int\sum_{1\leq2^j\leq3|z|} \int_{2^j\leq||x|-|z||\leq 2^{j+1},|z|/2\leq|x|\leq 2|z|}||x|-|z||^{\beta+u-n}|z|^{-\beta}|\widehat{\mu}(x)|^2|\widehat{\nu_{\e}}(z)|^2\,d\mathcal L^nx\,d\mathcal L^nz\\
&\lesssim \int\sum_{1\leq2^j\leq3|z|} 2^{j(\beta+u-n)}|z|^{-\beta}\int_{2^j\leq||x|-|z||\leq 2^{j+1},|z|/2\leq|x|\leq 2|z|}|\widehat{\mu}(x)|^2\,d\mathcal L^nx|\widehat{\nu_{\e}}(z)|^2\,d\mathcal L^nz\\
&= \int\sum_{1\leq2^j\leq3|z|} 2^{j(\beta+u-n)}|z|^{-\beta}\int_{2^j\leq|r-|z||\leq 2^{j+1},|z|/2\leq r\leq 2|z|}\int_{S(r)}|\widehat{\mu}(x)|^2\,d\sigma_r^{n-1}x\,dr|\widehat{\nu_{\e}}(z)|^2\,d\mathcal L^nz\\
&\lesssim \Gamma\int\sum_{1\leq2^j\leq3|z|} 2^{j(\beta+u-n)}|z|^{-\beta}2^j|z|^{n-1-\gamma}|\widehat{\nu_{\e}}(z)|^2\,d\mathcal L^nz\\
&\lesssim \Gamma\int |z|^{u-\gamma}|\widehat{\nu_{\e}}(z)|^2\,d\mathcal L^nz\lesssim \Gamma I_t(\nu).
\end{align*}
\end{proof}

For $0<s<n$ denote by $\gamma_n(s)$ the supremum of the numbers $\gamma$ such that
\begin{equation}\label{eq19}
\sigma(\mu)(r)\lesssim I_s(\mu)r^{-\gamma}\ \text{for}\ r>0
\end{equation}
holds for all $\mu\in\mathcal M(\Rn)$ with support in the unit ball. Estimates for $\gamma(s)$ are discussed in \cite{M5}, Chapter 15. For $s\leq (n-1)/2$ the optimal, rather easy, result $\gamma(s)=s$ is valid, see \cite{M5}, Lemma 3.15. This together with Theorem \ref{theo3} yields immediately Theorem \ref{theo6}. For $s>(n-1)/2$ the optimal estimate fails and Theorem \ref{theo3} only gives a lower bound for the dimension of intersections which stays below and bounded away from $\dim A + \dim B - n$. The deepest estimates are due to Wolff  \cite{W} in the plane and to Erdo\u gan \cite{E} in higher dimensions. They give $\gamma_n(s)\geq (n+2s-2)4$ for $n/2\leq s \leq (n+2)/2$. Theorem \ref{theo3} combined with this leads to the result that if $\dim A + \dim B/2 - (3n+2)/4>u>0$, then  
$$\mathcal L^n(\{z\in\Rn: \dim A\cap (g(B)+z)) > u\})>0$$
for $g\in O(n)$ outside an exceptional set $E$ with $\dim E \leq n(n-1)/2 - u$. Plugging into Theorem \ref{theo3} other known estimates for $\gamma_n(s)$ gives similar rather weak intersection results.

%\section{Examples}
%In the following we identify $O(2)$ with a subset of $O(3)$ letting $g\in O(2)$ mean the map $(x_1,x_2,x_3)\mapsto (g(x_1,x_2),x_3)$. 
%\begin{ex}\label{ex1}
%There are compact sets $A, B\subset\R^3$ such that $\mathcal H^2(A)>0, \mathcal H^2(B)>0$ and for every $g\in O(2)$ which is not the identity $A\cap (g(B)+z)=\emptyset$ for almost all $z\in\R^3$.
%\end{ex}
%\begin{proof}
%Let $C\subset\R^2$ be a compact set such that $\mathcal H^1(C)>0$ and $\mathcal H^1(P_L(C))=0$ for every orthogonal projection onto the  lines $L\subset\R^2$ through the origin. For the existence of such a set, see for example \cite{M4}, Example 9.2. Take $A=C\times [0,1]$ and $B=\{(x,y,0): x,y\in [0,1]\}$. These sets have the required properties.
%\end{proof}

\section{Examples}\label{Examples}

The first example here shows that in Theorem \ref{theo1} the bound $(n-1)(n-2)/2$ is sharp in the case where both sets have the maximal dimension $n$. This of course does not tell us anything in the plane but it explains the appearance of the dimension of $O(n-1)$. 
In the following we identify $O(n-1)$ with a subset of $O(n)$ letting $g\in O(n-1)$ mean the map $(x_1,\dots,x_n)\mapsto (g(x_1\dots,x_{n-1}),x_n)$. 
\begin{ex}\label{ex1}
Let $n\geq 3$. There are compact sets $A, B\subset\R^n$ such that $\dim A=\dim B=n$ and for every $g\in O(n-1)$,~  $\dim A\cap (g(B)+z)\leq n-1$ for  all $z\in\R^n$.
\end{ex}
\begin{proof}
Let $C, D\subset\R$ be compact sets such that $\dim C = \dim D = 1$ and for every $z\in\R$ the intersection $C\cap(D+z)$ contains at most one point. Such sets were constructed in \cite{M1}, the construction is explained also in  \cite{M4}, Example 13.18. Let $F$ be the closed unit ball in $\R^{n-1}$ and take $A=F\times C$ and $B=F\times D$. These sets have the required properties.
\end{proof}

The following example shows that we need some additional assumptions, for example as in Theorem \ref{theo3}, to get any result using only translations:

\begin{ex}\label{ex2}
There are compact sets $A, B\subset\R^n$ such that $\dim A = \dim B =n$ and for every $z\in\R^n$ the intersection $A\cap(B+z)$ contains at most one point.
\end{ex}
\begin{proof}
Let $C, D\subset\R$ be the compact sets of the previous example. Take $A=C^n$ and $B=D^n$. These sets have the required properties.
\end{proof}

I do not know what are the sharp bounds for the dimension of exceptional sets of Theorem \ref{theo1}. For simplicity, let us look at this question in the plane. Let $d(s,t)\in[0,1], 0<s,t\leq 2, s+t>2,$ be the infimum of the numbers $d>0$ with the property that for all Borel sets $A,B\subset\R^2$ with $\dim A = s, \dim B = t$ and for all $0<u<s+t-2$, 
$$\dim\{g\in O(2): \mathcal L^2(\{z\in\R^2:\dim A\cap(g(B)+z)>u\})=0\}\leq d.$$
The problem is to determine $d(s,t)$. We know from Theorem \ref{theo1} that if $s+t>3$, then $d(s,t)\leq 4-s-t$. In particular  $d(2,2)=0$. This suggests that $d(s,t)$ might be $4-s-t$ when $s+t>3$. However we know from Theorem \ref{theo6} that $d(s,t)\leq 3-s-t$  whenever $s\leq 1/2$.  I would be happy to see some examples sheding light into this question.

\section{Concluding remarks}
As mentioned in the Introduction, intersection problems of this type for general sets were first studied by Kahane in \cite{K} and by the author in \cite{M1}, involving transformations such as similarities. For the orthogonal group the result in \cite{M2} concerned the case where one of the sets has dimension bigger than $(n+1)/2$. In \cite{M3} a general method was developed to get dimension estimates for the distance sets and intersections once suitable spherical average estimates	(\ref{eq19}) for measures with finite energy are available. Such deep estimates were proved by Wolff in \cite{W} and Erdo\u gan in \cite{E}. They gave the best known results for the distance sets, see \cite{M5}, Chapters 15 and 16, but only minor progress for the intersections, as mentioned in Section \ref{decay}.  The known estimates for $\sigma(\mu)$ are discussed in  \cite{M5}, Chapter 15, see also \cite{LR} for a recent one. 

The reverse inequality in Theorem \ref{theo1} fails: for any $0\leq s\leq n$ there exists a Borel set $A\subset \Rn$ such that 
$\dim A\cap f(A)=s$ for all similarity maps $f$ of $\Rn$. This follows from \cite{F2}, see also Example 13.19 in \cite{M4} and the further references given there. The reverse inequality holds if one of the sets is a reasonably nice integral dimensional set, for example rectifiable, or if $\dim A\times B = \dim A + \dim B$, see \cite{M1}. This latter condition is valid if, for example, one of the sets is Ahlfors-David regular, see \cite{M4}, pp. 115-116. For such reverse inequalities no rotations $g$ are needed (or, equivalently, they hold for every $g$).

Exceptional set estimates in the spirit of this paper were first proved for projections by Kaufman in \cite{Ka}, then continued by Kaufman and the author \cite{KM} and by Falconer \cite{F1}. Peres and Schlag \cite{PS} proved such estimates for large classes of generalized projections. Exceptional set estimates for intersections with planes were first proved by Orponen \cite{O1} and continued by Orponen and  the author \cite{MO}. In \cite{O2} Orponen derived estimates for radial projections. All these estimates expect those in \cite{MO} and some in \cite{PS} are known to be sharp. Some of these and other related results are also discussed in \cite{M5}.

Recently Donoven and Falconer \cite{DF} investigated Hausdorff dimension of intersections for subsets of certain Cantor sets and Shmerkin and Suomala \cite{SS} for large classes of random sets.

\vspace{1cm}
\begin{footnotesize}
{\sc Department of Mathematics and Statistics,
P.O. Box 68,  FI-00014 University of Helsinki, Finland,}\\
\emph{E-mail address:} 
\verb"pertti.mattila@helsinki.fi" 
"

\end{footnotesize}


\begin{thebibliography}{CMM}

\bibitem[DF]{DF} C. Donoven and K.J. Falconer. Codimension formulae for the intersection of fractal subsets of Cantor spaces,  arXiv:1409.8070.

\bibitem[E]{E} M. B. Erdo\u gan. A bilinear Fourier extension problem and applications to the distance set problem,  
{\em Int. Math. Res. Not.}  {\bf 23} (2005), 1411--1425.

\bibitem[F1]{F1}
K.J. Falconer. Hausdorff dimension and the exceptional set of projections, 
{\em Mathematika} {\bf 29} (1982), 109--115.

\bibitem[F2]{F2}
K.J. Falconer. Classes of sets with large intersection, 
{\em Mathematika} {\bf 32} (1985), 191--205.

%\bibitem{Fal3}
%K.J. Falconer.
%{\em The Geometry of Fractal Sets},
%Cambridge University Press, Cambridge, 1985.

\bibitem[Fe]{Fe}
H. Federer.
{\em Geometric Measure Theory},
Springer Verlag, 1969.


\bibitem[K]{K}
J.--P. Kahane. 
Sur la dimension des intersections, In Aspects of Mathematics and Applications, North-Holland Math. Library, 34, (1986), 419--430

\bibitem[Ka]{Ka}
R. Kaufman.
On Hausdorff dimension of projections,
 {\em Mathematika} {\bf 15} (1968), 153-155.


 
 
\bibitem[KM]{KM}
R.~Kaufman and P. Mattila.
Hausdorff dimension and exceptional sets of linear transformations,
 {\em  Ann. Acad. Sci. Fenn. A Math.} {\bf 1} (1975), 387--392.
 
 
\bibitem[LR]{LR}
R. Luc\'a and K. Rogers. 
Average decay of the Fourier transform of measures with applications, arXiv:1503.00105.
 
% \bibitem[M]{Mar}
%J.~M. Marstrand.
% Some fundamental geometrical properties of plane sets of fractional
%  dimensions, {\em Proc. London Math. Soc.(3)} {\bf 4} (1954), 257--302.

%\bibitem[M1]{Mat}
%P.~Mattila.
% Hausdorff dimension, orthogonal projections and intersections with
%  planes,
% {\em Ann. Acad. Sci. Fenn. A Math.} {\bf 1} (1975),  227--244.
 
\bibitem[M1]{M1} P. Mattila. Hausdorff dimension and capacities of intersections of sets in n-space, {\em Acta Math.}  {\bf 152}, (1984), 77--105.

\bibitem[M2]{M2} P. Mattila. On the Hausdorff dimension and capacities of intersections, Mathematika  {\bf 32}, (1985), 213--
217.

\bibitem[M3]{M3} P. Mattila. Spherical averages of Fourier transforms of measures with finite energy; dimension of intersections and distance sets, Mathematika  {\bf 34}, (1987), 207--228.
217.

\bibitem[M4]{M4}
P. Mattila.
{\em Geometry of Sets and Measures in Euclidean Spaces},
Cambridge University Press, Cambridge, 1995.


\bibitem[M5]{M5}
P. Mattila.
{\em Fourier Analysis and Hausdorff Dimension},
Cambridge University Press, Cambridge, 2015.


% \bibitem[MM]{MM}
%P. Mattila and R. D. Mauldin. Measure and dimension functions: measurability and densities, {\em Math. Proc. Cambridge Phil. Soc.} {\bf 121} (1997), 81--100.

\bibitem[MO]{MO} P. Mattila and T. Orponen. Hausdorff dimension, intersection of projections and exceptional plane sections,   Proc. Amer. Math. Soc. {\bf 144} (2016), 3419--3430.

%\bibitem{Ob} D. M. Oberlin. Exceptional sets of projections, unions of k-planes, and associated transforms,  {\it Israel J. Math.} {\bf 202} (2014),  331--342.

\bibitem[O1]{O1} T. Orponen. Slicing sets and measures, and the dimension of exceptional parameters,   J. Geom. Anal. {\bf 24} (2014),  47--80.

\bibitem[O2]{O2} T. Orponen. A sharp exceptional set estimate for visibility,   arXiv:1602.07629.



\bibitem[PS]{PS} Y. Peres and W. Schlag. Smoothness of projections, Bernoulli convolutions, and the dimension 
of exceptions, {\em Duke Math. J.} {\bf 102} (2000), 193--251.   

\bibitem[SS]{SS} P. Shmerkin and V. Suomala. Spatially independent martingales, intersections, and applications,  arXiv:1409.6707.	

\bibitem[W]{W} T. W. Wolff. Decay of circular means of Fourier transforms of measures,  {\em Int. Math. Res. Not.}  {\bf 10} (1999), 547--567.
	
\end{thebibliography}
\end{document}